\newtheorem{theorem}{Theorem}[section]
\newtheorem{lemma}[theorem]{Lemma}
\newtheorem{definition}{Definition}
\newtheorem{example}{Example}
\newtheorem{proposition}{Proposition}
\newtheorem{remark}{Remark}
\newcommand{\x}{\bm{x}}
\newcommand{\A}{\mathtt{A}}
\newcommand{\B}{\mathtt{B}}
\newcommand{\N}{\mathcal{N}}
\newcommand{\E}{\mathcal{E}}
\newcommand{\G}{\mathcal{G}}
\renewcommand{\x}{\bm{x}}
\newcommand{\tb}{\textcolor{blue}}
\title{Equilibration of Coordinating Imitation and Best-Response Dynamics
}
\author{Nazanin Hasheminejad and Pouria Ramazi}
\begin{document}
\maketitle
\begin{abstract}
Decision-making individuals are often considered to be either \emph{imitators} who copy the action of their most successful neighbors or \emph{best-responders} who maximize their benefit against the current actions of their neighbors. 
In the context of \emph{coordination games}, where neighboring individuals earn more if they take the same action, by means of potential functions, it was shown that populations of all imitators and populations of all best-responders equilibrate in finite time when they become active to update their decisions sequentially. 
However, for mixed populations of the two, the equilibration was shown only for specific activation sequences. 
It is therefore, unknown, whether a potential function also exists for mixed populations or if there actually exists a counter example where an activation sequence prevents equilibration. 
We show that in a \emph{linear graph}, the number of ``sections'' (a sequence of consecutive individuals taking the same action) serves as a potential function, leading to equilibration, and that this result can be extended to \emph{sparse trees}. 
The existence of a potential function for other types of networks remains an open problem. 
\end{abstract}

\emph{Keywords.}
 Decision making,
 best-response, 
 imitation, 
 coordination game,
 convergence.

\section{Introduction}
Evolutionary game theory has been successfully applied in different applications ranging from cancer and epidemiology to finance and rumour propagation \cite{archetti2019cooperation,li2017new,coninx2018gets,askarizadeh2019evolutionary}.
In the context of decision-making, individuals are modeled as game-playing agents who choose from a number of available strategies and accordingly earn payoffs against their matched opponents.  
The agents revise their decisions according to some update rules, the most common being \emph{(myopic) best-response} and \emph{imitation}.
An agent following best-response, called a \emph{best-responder}, chooses the strategy that maximizes its payoff against its neighbors given that they would not change their strategies. 
On the other hand, an agent following imitation, known as an \emph{imitator}, simply imitates a neighbor with a higher payoff. 
The wide use of best response by human has been confirmed in experimental studies \cite{mas2016behavioral}.
Similarly, imitation behavior emerges in several real-world scenarios, 
such as employees' ``costumer sweethearting'' \cite{ertz2022imitation}, building cultural intelligence \cite{pauluzzo2021imitation}, and training language models \cite{srivastava2022beyond}. 

Researchers have explored the existence and convergence towards an equilibrium point in both imitation and best-response dynamics \cite{como2020imitation,fu2020evolutionary, farahbakhsh2021best, hu2019stability}.
In the anti-coordination context where the highest-earning decision is the opposite of the opponent’s, a population of best-responders converges to an equilibrium state \cite{ramazi2016networks}. 
The same holds for a population of best-responders in the coordination context, where the highest-earning strategy matches the opponent's \cite{ramazi2016networks, ramazi2020convergence}.
For populations of imitators, however, equilibration is guaranteed only in the coordination context \cite[Theorem 1]{ramazi2022lower}.
All of these studies used a potential function to prove equilibrium convergence.
Clearly, a mixed population of imitators and best-responders may not equilibrate and undergo perpetual fluctuations. 
The outcome is known for the anti-coordination case: equilibration can take place if and only if there exists an equilibrium \cite{le2020heterogeneous}.
What about a mixed population of imitators and best-responders in the coordination context? 
The existence of an activation sequence was established in \cite{sakhaei2021equilibration} that would drive any such mixed population to an equilibrium state. 
It however remains open whether a potential function exists for such populations, or if there is a counter example where an activation sequence can prevent a mixed population from equilibration.


We start tackling this problem for the simple \emph{linear graph} and find that the number of the so-called ``sections'' (consecutive same-strategy playing agents) serves as a potential function. 
We then extend the results to a ring. 
Next, we proceed to a \emph{starlike} graph, a central ``branching node'' connected to several linear graphs or ``branches''.
We show that there always exists a branch where the number of sections in that branch will again be a potential function, establishing equilibration. 
Finally, we generalize the idea to \emph{sparse trees}, i.e., trees where the distance between each two branching nodes is at least three. 


\section{Model} \label{sec_model}
Consider an undirected network $\G$ over a finite set $\N=\{1,2,\ldots,n\}$ of agents who decide between strategies $\A$ and $\B$ over time $t=0,1,2,\ldots$.
For each agent $i\in\N$, the network defines a set of \emph{neighbors} $\N_i\subseteq\N\setminus\{i\}$ that are connected to agent $i$.
At every time step, each agent $i\in\N$ plays a two-player \emph{(row-column) coordination game} with each of its neighbors $j\in\N_i$ and earns a payoff according to their strategies and its payoff matrix
\begin{equation} \label{coordinationPayoffMatrix}
    \bm{\pi}^i 
    = \begin{bmatrix} R_i & S_i \\ T_i & P_i \end{bmatrix}, \ 
    \min\{R_i, P_i\} > \max\{T_i, S_i\} 
\end{equation}
where $R_i, S_i, T_i$, and $P_i$ are agent $i$'s payoffs when agents $i$ and $j$ play strategy pairs $(\A,\A)$, $(\A,\B)$, $(\B,\A)$, and $(\B,\B)$.
Then agent $i$'s \emph{utility} $u_i$ is the accumulated payoff earned against all of its neighbors:
$
    u_i(\x) = \sum_{j\in\N_i} \bm{\pi}^i_{x_i, x_j}
$ 
where $x_k$ is the strategy of agent $k$, 
the \emph{state} $\x = [x_k]$ is the vector of all agents' strategies, and
$\bm{X}_{pq}$ denotes the entry of matrix $\bm{X}$ at row $p$ and column $q$.
Agents update their strategies based on the type of \emph{update rule} they follow, which is either \emph{best response}, that is to choose the strategy that maximizes its utility, or \emph{imitation}, that is to copy the strategy of its highest earning neighbor. 
The updates happen asynchronously over time, i.e., at each time step, a single agent becomes active to update its strategy at the next time step.
More specifically, agent $i$ active at time $t$ updates its strategy at time $t+1$ to the following if it is an \emph{imitator}:
\begin{equation}    \label{imitatorsUpdateRule}
        x_i(t+1) 
        = x_k(t), \qquad  k=\arg\max_{j\in\N_i} u_j(\x(t)).
\end{equation}
and to the following if it is a \emph{best-responder}:
\begin{equation}\label{bestRespondersUpdateRule}
    x_{i}(t+1) 
    = \arg\max_{\mathtt{X}\in\{\A,\B\}}u_i(\x_{i=\mathtt{X}}(t))
\end{equation}
where $\x_{i=\mathtt{X}}$ is the vector $\x$ where the $i^{\text{th}}$ entry is fixed to strategy $\mathtt{X}$.
In the case where both strategies $\A$ and $\B$ maximize the utilities in  \eqref{imitatorsUpdateRule} or \eqref{bestRespondersUpdateRule}, agent $i$ does not switch strategies, i.e., $x_i(t+1) = x_i(t)$.
\begin{remark}
    The standard inequalities in a coordination game are $R_i>T_i$ and $P_i>S_i$ \cite{riehl2018survey}, implying that player $i$'s payoff is maximized when playing the same strategy as that of its opponent. 
    What condition \eqref{coordinationPayoffMatrix} additionally imposes are the inequalities $P_i>T_i$ and $R_i>S_i$, resulting in the so-called \emph{opponent-coordination} payoff matrix \cite{ramazi2022lower}. 
    Then agent $i$'s payoff increases if its neighbor switches her strategy to that of agent $i$, which proves useful in constructing energy functions for imitation dynamics. 
\end{remark}
\begin{example}\textbf{[Programming languages]}
    Given the required effort to master a new programming language, programmers have to decide between two options each time they program an application: \emph{(i)} the comfort of working in the already experienced language and \emph{(ii)} the benefit of learning a new language.
    Some base their decisions on the prevalence of the language, because common languages are supported by a community of peers who can smoothen the learning experience via online forums. 
    Others may focus on how successful other programmers were in terms of, e.g., their salaries or reputation of developed applications.
    The agents here are the community of App developers who interact via online networks.
    The programming languages  \texttt{Python} and \texttt{Java} may be considered as the strategies
    and a programmer would earn more from his peers if they use the same language.
\end{example}
\begin{example}\textbf{[Social media]} 
    \texttt{Telegram} and \texttt{WhatsApp} are two social media applications. 
    Individuals choosing one of them as their main communication stream may decide based on the (weighted) frequency or satisfaction of their friends on each platform, implying the best response and imitation update rules respectively.
    The individuals also have personal preferences over the apps because of their features, resulting in different payoff matrices.
\end{example}

Define the agents' \emph{activation sequence} as the sequence $\langle a_t\rangle_{t=0}^{\infty}$, where $a_t$ is the active agent at time $t$.
The activation sequence together with update rules \eqref{imitatorsUpdateRule} and \eqref{bestRespondersUpdateRule} govern the state $\x(t)$ and define the \emph{decision-making dynamics}, which we refer to as the \emph{coordinating best-response and imitation dynamics}. 
A state $\x^*\in\{\A,\B\}^{n}$ is an \emph{equilibrium} of the dynamics if under every activation sequence,
$
    \x(0)=\x^* 
$    
implies
$
    \x(t) = \x^*
$
for all $t\geq 0$.
We are interested in determining whether the dynamics eventually equilibrate.
We avoid trivial cases where the dynamics ``get stuck'' at a non-equilibrium state because one or more unsatisfied agents do not get the chance to become active.
To this end, we assume the activation sequence is \emph{persistent}, i.e., \tb{each agent becomes active infinitely many times} \cite{ramazi2017asynchronous}.

It follows from the coordination condition \eqref{coordinationPayoffMatrix} and best-response update rule \eqref{bestRespondersUpdateRule} that if agent $i$ tends to play $\A$ at some state, so does it at any other state with more $\A$-playing neighbors. 
In a more restrictive sense, it can be also shown that if an imitator tends to play $\A$ at some state, so does it at any other state where all of its $\A$-playing neighbors still play $\A$. 
This property is referred to as \emph{$\A$-coordinating} \cite[Definition 2]{sakhaei2021equilibration}, based on which, the existence of an activation sequence that would drive the dynamics from a given initial condition to an equilibrium state was shown in \cite[Lemma 1, Theorem 2]{sakhaei2021equilibration}.
However, it remains open whether the dynamics equilibrate under an arbitrary persistent activation sequence. 

\section{Equilibration results}
The main result of this paper is about the equilibration of ``sparse trees'' as presented in the following theorem.
The \emph{distance} of two nodes in a graph is the number of edges in the shortest path connecting the two.
\tb{A \emph{tree} is a network where exactly one path connects every pair of nodes.}
Define a \emph{branching agent} as an agent with more than two neighbors.
We call a tree network \emph{sparse} if the distance between every pair of its branching agents is greater than two. 
\begin{theorem}[Sparse tree] \label{th_sparseThree}
    A sparse-tree network equilibrates under the coordinating best-response and imitation dynamics with an arbitrary persistent activation sequence.
\end{theorem}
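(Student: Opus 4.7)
The plan is to reduce the sparse-tree statement to the linear-graph and starlike-graph results outlined earlier in the paper, via induction on the number of branching agents. The base case of zero branching agents is a linear graph, where the number of sections is already a potential function. For the inductive step, I would take a sparse tree $T$ with at least one branching agent and identify a \emph{peripheral} branching agent $v$, one all of whose incident branches terminate at leaves of $T$ except for a single branch $\beta^{*}$ leading towards the remainder of the tree. Such a $v$ always exists: contracting every maximal degree-$\leq 2$ path of $T$ to an edge yields a smaller tree on the branching agents of $T$, and every nontrivial tree has a leaf.

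The key structural leverage is sparsity: on $\beta^{*}$ there are at least two internal degree-$2$ agents between $v$ and the next branching agent $w$. I would isolate a local starlike region $R_v$ consisting of $v$, all of its pendant branches, and the two degree-$2$ agents of $\beta^{*}$ nearest $v$. The starlike-graph result applies to $R_v$ and supplies a branch whose sections count is a monotone potential. The buffer of two non-branching agents ensures that outside activity on $T\setminus R_v$ must propagate through two intermediate degree-$2$ agents before it can influence $v$, so at any single time step, changes outside $R_v$ cannot simultaneously alter the neighborhoods of $v$ inside both a pendant branch and the connecting branch.

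To glue the local potentials into a global argument, I would build a lexicographic potential over the branching agents of $T$, ordered by distance from a fixed root branching agent (peripheral agents first), with the entry for each branching agent being its local starlike-region sections count. Because the activation sequence is persistent and the potential is integer-valued and bounded below, it must eventually stabilize. Once the states of all peripheral agents and their pendant branches stabilize, the boundary conditions seen by the interior branching agents become fixed, and the remaining active portion reduces to a sparse tree with strictly fewer branching agents, to which the induction hypothesis applies.

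The main obstacle I anticipate is verifying that the local sections count inside $R_v$ is monotone non-increasing under every admissible update, including updates at a buffer agent of $\beta^{*}$ whose strategy is indirectly driven from outside. This requires a careful case analysis distinguishing imitator and best-responder updates at $v$ and at each of its neighbors, and showing that an update either merges two sections (strict decrease) or is blocked by the coordination condition \eqref{coordinationPayoffMatrix} combined with the sparsity buffer. The most delicate case is an imitator branching agent copying a neighbor whose strategy has just been perturbed from outside, because imitation couples the potentials of adjacent branches in a way that best-response does not; handling this coupling cleanly will be the technical heart of the proof.
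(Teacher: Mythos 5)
Your overall strategy (induction on the number of branching agents, localizing to a starlike region around a peripheral branching agent, and gluing with a lexicographic potential) differs from the paper's, which argues by contradiction: it passes to a persistent \emph{eventually periodic} activation sequence (Lemma \ref{lem_periodic_activation_seq}), shows some branching agent must switch infinitely often, and then shows that the section count of that agent's ``special branch'' undergoes an unbounded net decrease; when the special branches of two branching agents coincide (the ``golden branch''), the decrements from both ends accumulate rather than cancel, still yielding a contradiction. Your version, however, has a genuine gap at exactly the point you flag as ``the technical heart,'' and it is not a routine verification. Proposition \ref{prop_starlike} is a statement about a \emph{closed} network, whereas your region $R_v$ is an open subsystem: the truncated branch $\beta^{*}$ ends at an agent whose neighbor lies outside $R_v$. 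The monotonicity of the section count along a linear piece (Lemma \ref{lem_SectionReduction_in_general_paths}) requires each end of the piece to be a leaf or to have a fixed strategy; for the truncated $\beta^{*}$ neither holds, so the local section count of $R_v$ can \emph{increase} every time the outer buffer agent is flipped from outside. The two-agent buffer guaranteed by sparsity only delays propagation by two activations; under a persistent sequence, oscillation beyond the next branching agent $w$ can repeatedly re-inject sections into $R_v$, so the claimed monotone local potential does not exist without a substantial additional argument.

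The lexicographic glue inherits the same problem in a different guise: for the ordering to be well-founded you must show that whenever the entry of an interior branching agent's region increases, the entry of some higher-priority peripheral region strictly decreases, but an update on a connecting branch shared by two regions can decrease one local count while increasing the adjacent one, and you give no mechanism forcing this trade-off to go in the right lexicographic direction. There is also a circularity risk in the induction step: stabilizing the periphery requires controlling the influence arriving from the interior, while the interior is only handled by the induction hypothesis after the periphery stabilizes. The paper confronts the coupling head-on instead of insulating against it: when the special branches of two oscillating branching agents intersect in a single path $P$, both agents are shown to force a net decrement of $n(P)$ along infinite subsequences of their switching times, so $n(P)$ would be unbounded below --- the interaction helps rather than hurts. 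To repair your proof you would need either an open-system analogue of Proposition \ref{prop_starlike} (a starlike with one boundary agent whose strategy is externally driven), or the paper's device of first reducing to an eventually periodic activation sequence, which converts ``switches infinitely often'' into ``switches within every period'' and is what makes the counting argument on the shared branch close.
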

Sparse trees are a generalization of starlikes, which in turn are a generalization of linear graphs, defined in what follows. 
We accordingly, first show the result for linear graphs (as well as rings), then starlikes, and finally sparse trees. 
\subsection{Linear graphs}
Consider network $\G = (\N,\E)$ with edge set $\E=\{\{i,i+1\}\mid i=1,\ldots,n-1\}$, called a \emph{linear graph}.
\begin{definition}[Border agent]
    Given a linear graph, agent $i$ is a \emph{right-border} (resp. \emph{left-border}) if it has a different strategy compared to agent $i+1$ (resp. $i-1$).
    A single agent with a strategy different from those of its two neighbors is both a right and left border agent.
    An agent is a \emph{border} if it is right or left-border (or both).
\end{definition}
We consider the most ``left'' (resp. ``right'') agent, i.e., agent 1 (resp. $n$), as a left (resp. right) border agent.
We can now define the notion of ``section'' as follows (\Cref{fig:line1}).
\begin{figure}[h]
    \centering
    \includegraphics[width=0.4\textwidth]{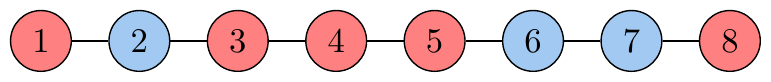}
    \caption{\small
    \textbf{A linear graph with five sections.}
    The sections in this linear graph are $\{1\}$, $\{2\}$, $\{3,4,5\}$, $\{6,7\}$, and $\{8\}$. 
    Blue and red are used for strategies $\A$ and $\B$, respectively. Agents $1$, $2$, and $8$ are each both left and right borders.
    Agents $3$, and $6$ are only left-borders, while agents $5$ and  $7$ are right-borders. 
    Agent $4$ is a non-border agent.
    }
    \label{fig:line1}
\end{figure}
\begin{definition}[Section]
    A \emph{section} in a linear graph at a given strategy state is a set of consecutive same-strategy playing agents $p,p+1,\ldots,q$, where $q\geq p$ and agents $p$ and $q$ are borders.
    The size of the section is defined as $q-p+1$.
\end{definition}
The special case of $p=q$ results in a size-one section consisting of a single agent. 
The number of sections appears to serve as a potential function according to the following lemma. 
The key idea of the proof is that the emergence of a new section requires the sequence $(\A,\A,\A)$ (resp. $(\B,\B,\B)$) to turn into $(\A,\B,\A)$ (resp. $(\B,\A,\B)$), which is impossible due to the coordinating nature of the population dynamics.  
\begin{lemma}\label{lem_numberOfSectionsDoesNotIncrease}
    The number of sections in a linear graph does not increase under the coordinating best-response and imitation dynamics with an arbitrary activation sequence.
\end{lemma}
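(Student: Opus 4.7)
The plan is to count sections through adjacent disagreements: for a state $\x$ on the linear graph, let $D(\x) = |\{k \in \{1,\dots,n-1\} : x_k \neq x_{k+1}\}|$. By the definition of a section, the number of sections equals exactly $D(\x)+1$, so it suffices to prove $D(\x(t+1)) \le D(\x(t))$ at every activation.

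Suppose agent $i$ is active at time $t$ and switches from $\A$ to $\B$ (the case $\B \to \A$ is symmetric). The switch can alter only the two pairs $(i-1,i)$ and $(i,i+1)$, so the change $D(\x(t+1)) - D(\x(t))$ decomposes into contributions from these two pairs. A short case analysis on the strategies of $i$'s neighbors yields: if both neighbors play $\B$, two disagreements are removed ($\Delta D = -2$); if the two neighbors play different strategies, one disagreement is removed and one is added ($\Delta D = 0$); if both neighbors play $\A$, two new disagreements are created ($\Delta D = +2$). Endpoint agents, having only one neighbor, produce the analogous one-sided variants $-1, 0, +1$. In every case, the count strictly increases \emph{only} when every neighbor of $i$ plays $\A$, i.e., the strategy $i$ is about to abandon.

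The main step is to rule out this remaining configuration under both update rules. If $i$ is a best-responder, the coordination condition \eqref{coordinationPayoffMatrix} gives $R_i > T_i$, so when all neighbors play $\A$ the utility of playing $\A$ strictly exceeds that of playing $\B$; hence $i$ would not switch to $\B$. If $i$ is an imitator, every neighbor plays $\A$, so the $\arg\max$ in \eqref{imitatorsUpdateRule} necessarily selects an $\A$-playing neighbor, forcing $x_i(t+1) = \A$. Both conclusions contradict the assumed switch, and the symmetric argument disposes of the $\B \to \A$ case.

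I do not anticipate a serious technical obstacle: the ``forbidden'' local configuration is exactly the $(\A,\A,\A) \to (\A,\B,\A)$ pattern (and its mirror) singled out in the lemma's informal hint, and both update rules preclude it essentially by inspection. The only bookkeeping care required is the uniform handling of the two endpoints, which the single-neighbor variant of the case analysis covers.
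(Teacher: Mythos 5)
Your proof is correct and follows essentially the same route as the paper's: both reduce the claim to the observation that the section count can only increase via the local pattern $(\A,\A,\A)\to(\A,\B,\A)$ (or its mirror), which is impossible because a best-responder with all-$\A$ neighbors strictly prefers $\A$ under the coordination condition and an imitator can only copy an $\A$-playing neighbor. Your disagreement-count $D(\x)$ is just a slightly more explicit bookkeeping of the paper's border-agent case analysis, and it handles the endpoints with the same one-sided variant the paper treats as its second case.
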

\begin{proof}
    A change in the population state takes place only if a border agent is active because other agents play the same strategy as their neighbors and hence do not switch strategies according to update rules \eqref{bestRespondersUpdateRule} and \eqref{imitatorsUpdateRule}.
    So the number of sections change at time $t$ only if some border agent $i$ becomes active at time $t-1$ and switches its strategy at time $t$ to say strategy $s$. 
    At least one neighbor of the border agent plays $s$ at time $t-1$ as otherwise, the agent is not border. 
    We have the following two cases, in neither of which the number of sections increases:
    
    \emph{Case 1. Agent $i$ has two neighbors, i.e., $i\not\in\{1,n\}$.}  
    If both neighbors play $s$, then the border agent itself forms a section at time $t-1$, which disappears at time $t$.
    Since no other sections are generated, this results in a reduction in the number of sections. 
    If only one neighbor plays $s$, then the number of sections does not change after the switch.
    
    \emph{Case 2. Agent $i$ has one neighbor, i.e., $i\in\{1,n\}$.} 
    Then the neighbor plays $s$ at time $t-1$, implying that agent $i$ itself again forms a section, which disappears at time $t$, resulting in a reduction.
%
\end{proof}    
As the number of sections are finite, 
in view of Lemma \ref{lem_numberOfSectionsDoesNotIncrease}, there exists some time $T>0$ when the number of sections becomes fixed and no longer changes. 
The sections may still expand or shrink though, preventing equilibration.
However, one can show that once a section expands from a certain direction, say left (i.e., in the descending order of the agents' labels), then it may no longer shrink from left.
Namely, if the left border of a section ``moves'' left after time $T$, it never ``moves'' right in the future. 
This idea is rigorously captured in the following lemma. 
For every time $t\geq T$, there is the same number of sections which we label as $1,2,\ldots,S$ from left to right, that is in the ascending order of their left borders. 
Denote by $L_s(t)$ and $R_s(t)$ the left and right borders of section $s$ at time $t\geq T$.
Given a sequence of consecutive agents $p,p+1,\ldots,q$, where $q\geq p$, denote their strategies by $\x_{(p,p+1,\ldots,q)}$.
\begin{lemma}\label{lem_leftBordersOnlyMoveLeft}
    Consider the time $T$ when the number of sections in the linear graph is fixed.
    Then for every section $s$ and any time $t_1\geq T$,
    \begin{gather} 
        \scalebox{.99}{$
        L_s(t_1+1) = L_s(t_1) - 1 
        \Rightarrow
        \forall t\geq t_1\ L_s(t+1)\! \leq\! L_s(t),$} \label{eq_lem_pathEq_statement1}\\
        \scalebox{.99}{$
         R_s(t_1+1) = R_s(t_1) + 1 
        \Rightarrow
         \forall t\geq t_1\
        R_s(t+1)\! \geq\! R_s(t).$} \nonumber
    \end{gather}
\end{lemma}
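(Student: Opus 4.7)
The plan is to prove the first implication by contradiction; the second follows by a left--right reflection of the graph. Suppose, toward a contradiction, that some minimal $t_2\geq t_1$ satisfies $L_s(t_2+1)=L_s(t_2)+1$. By minimality of $t_2$, the border $L_s$ is non-increasing on $[t_1,t_2]$, hence $L_s(t_2)\leq L_s(t_1)-1$. Let $j:=L_s(t_2)$. The move at $t_2+1$ is caused by agent $j$ switching from the strategy $\B$ of section $s$ to the strategy $\A$ of section $s-1$. Since the number of sections is constant on $[T,\infty)$, section $s$ must contain at least two agents at $t_2$ (otherwise it would vanish), giving $x_{j+1}(t_2)=\B$, while $x_{j-1}(t_2)=\A$ because $j-1$ is the right border of section $s-1$.

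Next I would locate the earlier moment at which agent $j$ joined section $s$. Because $L_s$ is non-increasing on $[t_1,t_2]$ and descends from $L_s(t_1)$ to $j$, there is a unique $\tau_j\in[t_1,t_2)$ with $L_s(\tau_j)=j+1$ and $L_s(\tau_j+1)=j$; at $\tau_j$, agent $j$ was the right border of section $s-1$ and switched from $\A$ to $\B$. By the same section-count argument, section $s-1$ had size at least two at $\tau_j$, so $x_{j-1}(\tau_j)=\A$ and $x_{j+1}(\tau_j)=\B$. Thus agent $j$ performs opposite strategy switches at $\tau_j$ and $t_2$ while facing identical left and right neighbor configurations $(\A,\B)$.

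It remains to rule out both update rules at agent $j$. If $j$ is a best-responder, the switches $\A\to\B$ at $\tau_j$ and $\B\to\A$ at $t_2$ require $T_j+P_j>R_j+S_j$ and $R_j+S_j>T_j+P_j$ respectively, an immediate contradiction. The main obstacle is the imitator case, since the imitator decision depends on $u_{j-1}$ and $u_{j+1}$, which in turn depend on $x_{j-2}$ and $x_{j+2}$ and these may differ at the two times. I would write out the switching conditions $u_{j+1}(\tau_j)>u_{j-1}(\tau_j)$ and $u_{j-1}(t_2)>u_{j+1}(t_2)$ explicitly, then eliminate the unknown terms $\bm{\pi}^{j-1}_{\A,x_{j-2}}$ and $\bm{\pi}^{j+1}_{\B,x_{j+2}}$ using the coordination inequalities $R_{j-1}>S_{j-1}$ and $P_{j+1}>T_{j+1}$ from \eqref{coordinationPayoffMatrix}: the bounds $\bm{\pi}^{j+1}_{\B,\cdot}\leq P_{j+1}$ and $\bm{\pi}^{j-1}_{\A,\cdot}\geq S_{j-1}$ reduce the first condition to $T_{j+1}+P_{j+1}>R_{j-1}+S_{j-1}$, while the reversed bounds applied to the second condition yield the opposite strict inequality, producing the contradiction. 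A minor endpoint adjustment then handles $j\in\{2,n-1\}$, where one of $j\pm 1$ is a degree-one agent and its utility consists of a single payoff term.
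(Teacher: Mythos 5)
Your proposal is correct and follows essentially the same route as the paper: bracket the first rightward move of the left border by the last leftward move, identify the single agent that performs two opposite switches while facing the same $(\A,\B)$ neighbor configuration, and derive a contradiction from the coordination inequalities. The only cosmetic difference is that the paper outsources the imitator case to Lemma~\ref{lem_impossible} via a monotonicity chain on $u_{i}$ and $u_{i-2}$, whereas you expand the same inequalities into explicit payoff-entry bounds ($T_{j+1}+P_{j+1}$ versus $R_{j-1}+S_{j-1}$).
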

\begin{proof}
    We prove the first equation by contradiction; the proof of the second equation is similar. 
    Assume the contrary and let $t_3>t_1$ be the first time \eqref{eq_lem_pathEq_statement1} is violated, i.e., 
    $
        L_s(t_3+1) = L_s(t_3) + 1.
    $ 
    Let $t_2\in[t_1,t_3-1]$ be the last time that the left border of $s$ decreased, i.e., 
    $
        L_s(t_2+1) = L_s(t_2) - 1.
    $ 
    Let agent $i$ be the left border of section $s$ at time $t_2$, i.e., $i = L_s(t_2)$. 
    Then
    \begin{gather} 
        L_s(t_2) = L_s(t_3+1) = i, \label{eq_prop_pathEq_1}\\
        L_s(t) = i-1 \quad \forall t \in [t_2+1, t_3],  \label{eq_prop_pathEq_2} 
    \end{gather}
    Without loss of generality, assume that $x_i(t_2) = \B$.
    It is straightforward to show that if the agents of section $s$ play a strategy, say $\B$, at time $T$, then the agents of section $s$ will play $\B$ at every future time step as well. 
    Therefore, since agent $i$ is the left border of section $s$ at time $t_2$ and plays $\B$ at $t_2$, it follows that all the agents in section $s$ play $\B$ at every time $t\geq T$.
    Thus, in view of \eqref{eq_prop_pathEq_1} to \eqref{eq_prop_pathEq_2},
    \begin{align*}
        \x_{(i-2,i-1,i)}(t_2) & = (\A,\A,\B), \\
        \x_{(i-2,i-1,i)}(t_2+1) & = (\A,\B,\B), \\
        \x_{(i-2,i-1,i)}(t) & = (\A,\B,*) \quad \forall t\in[t_2+1,t_3-1],\\
        \x_{(i-2,i-1,i)}(t_3) & = (\A,\B,\B), \\
        \x_{(i-2,i-1,i)}(t_3+1) & = (\A,\A,\B).
    \end{align*}
    The reason why $x_{i-2}(t_2) = \A$ is that otherwise a section would be removed at $t_2+1$, which is impossible as the number of sections is assumed to be fixed after time $T$. 
    Similarly, $x_{i-2}(t_3) = \A$ as otherwise a new section would be generated at time $t_3+1$.
    
    Now we show that the two switches of strategies of agent $i-1$ at times $t_2+1$ and $t_3+1$ are in conflict. 
    Note that at both times $t_2$ and $t_3$ agent $i$ plays $\B$ but has at time $t_2$ at most and at time $t_3$ at least one other $\B$-playing neighbor.
    So as the game is coordinating, i.e., in view of \eqref{coordinationPayoffMatrix}, 
    $
        u_{i}(t_3) \geq u_{i}(t_2).
    $    
    We reach a contradiction in view of Lemma \ref{lem_impossible} and by letting $T=t_2$ and $T'=t_3$.
\end{proof}

We are ready to prove the equilibration of linear graphs.
Consider a section $s$ at time $T$. 
We say that \emph{the left border of section $s$ moves left at time $t\geq T+1$} if $L_s(t) = L_s(t-1) - 1$ and \emph{moves right} if  $L_s(t) = L_s(t-1) + 1$. 
Similarly, the movement of the right border is defined.
\begin{proposition}\label{prop_equlibrationOfPathStructuredPopulations}
    A linear graph equilibrates under the coordinating best-response and imitation dynamics with an arbitrary persistent activation sequence.
\end{proposition}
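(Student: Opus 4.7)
The plan is to combine Lemma \ref{lem_numberOfSectionsDoesNotIncrease} and Lemma \ref{lem_leftBordersOnlyMoveLeft} to show that after finitely many time steps the state $\x(t)$ stops changing. By Lemma \ref{lem_numberOfSectionsDoesNotIncrease}, the number of sections is eventually fixed at some time $T$ and some integer $S$. As observed in the proof of Lemma \ref{lem_leftBordersOnlyMoveLeft}, once the section count is fixed, the strategy played inside each section can no longer change (a strategy flip inside section $s$ would either destroy $s$ or split it into three, contradicting the fixed count). Hence for every $t\geq T$, the state $\x(t)$ is completely determined by the $2S$ integers $L_1(t),R_1(t),\ldots,L_S(t),R_S(t)$.

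The central step is to argue that each of these $2S$ borders is eventually monotone, and therefore eventually constant. Lemma \ref{lem_leftBordersOnlyMoveLeft} directly handles ``outward'' moves: once a left border moves left (or a right border moves right) it can never reverse. For the two remaining cases I would invoke adjacency of consecutive sections, namely $L_{s+1}(t)=R_s(t)+1$. A rightward move of $L_{s+1}$ is the same event as a rightward move of $R_s$, and a leftward move of $R_s$ is the same event as a leftward move of $L_{s+1}$; thus Lemma \ref{lem_leftBordersOnlyMoveLeft}, applied to the appropriate adjacent border, rules out these reversals as well. The outermost borders $L_1\equiv 1$ and $R_S\equiv n$ are fixed by the convention for the linear graph, so there is nothing to prove for them. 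This reduction via adjacency is the only non-routine point in the argument.

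Since each border is an integer-valued sequence in $\{1,\ldots,n\}$ that becomes monotone after $T$, each stabilizes at some finite time; taking $T'\geq T$ to be the maximum over the $2S$ stabilization times, the state $\x(t)$ is constant for all $t\geq T'$. By persistence, every agent becomes active infinitely many times after $T'$ yet never switches strategy; consequently, under any activation sequence starting from $\x(T')$, no active agent ever switches, so $\x(T')$ is an equilibrium in the sense of Section \ref{sec_model}. This establishes equilibration of the linear graph. I do not anticipate a serious obstacle beyond the adjacency trick flagged above, which is what lets us promote the one-sided statement of Lemma \ref{lem_leftBordersOnlyMoveLeft} into full monotonicity of every border.
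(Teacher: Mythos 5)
Your proof is correct and follows essentially the same route as the paper: fix the number of sections via Lemma~\ref{lem_numberOfSectionsDoesNotIncrease}, use Lemma~\ref{lem_leftBordersOnlyMoveLeft} to make each border eventually monotone, and conclude by boundedness, integrality, and persistence. The only cosmetic difference is that you derive full monotonicity of every border through the adjacency identity $L_{s+1}=R_s+1$, whereas the paper argues per section that the two borders cannot cross; both closings are valid, and in fact the dichotomy ``moves left at some time'' versus ``never moves left'' already gives eventual monotonicity of each border directly from Lemma~\ref{lem_leftBordersOnlyMoveLeft}, so the adjacency step you flag as non-routine is not actually needed.
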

\begin{proof}
    Consider some section $s$ at time $T$ when the number of sections is fixed. 
    If the left border of section $s$ moves left at any future time, then it can only move left afterward according to Lemma \ref{lem_leftBordersOnlyMoveLeft}.
    Since the linear graph is constrained from left by agent $1$, the left border of section $s$ will be fixed at some time. 
    Similarly, the right border will be fixed if it moves right at some point.
    So if the left border moves left at some time and the right border moves right, then the borders of section $s$ will be fixed for all future times. 
    
    Now if any of the borders, say right, becomes fixed but the left one only moves right after time $T$, then also the left border becomes fixed at some point as it is bounded from right by the right border (cannot pass it).
    On the other hand, if the right border only moves left after time $T$ and the left only moves right, again the two will become fixed as they cannot pass each other. 
    Therefore, the borders of section $s$ will become fixed at some finite time. 
    Since $s$ was an arbitrary section, it holds that at some finite time, the borders of every section becomes fixed. 
    This implies equilibration as the activation sequence is persistent.
\end{proof}

\subsection{Extension to rings}
A network $\G = (\N,\E)$ with edge set $\E=\{\{1,2\},\dots,\{n,n+1\},\{n+1,1\}\}$ is a \emph{ring}.
\begin{proposition}[Rings]
 A ring network equilibrates under the coordinating best-response and imitation dynamics with an arbitrary persistent activation sequence.
\end{proposition}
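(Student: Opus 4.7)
The plan is to reduce the ring case essentially to the linear-graph argument, adjusting for the absence of absolute endpoints. In a ring every agent has exactly two neighbors, so the single-neighbor branch (Case~2 in the proof of Lemma~\ref{lem_numberOfSectionsDoesNotIncrease}) simply disappears while Case~1 continues to apply at every border agent. Consequently, the number of sections does not increase and becomes fixed after some finite time $T$ at some value $S$.

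I would then split on $S$. If $S=0$, equivalently all agents play the same strategy, the system is already at an equilibrium. Note that $S$ cannot equal $1$, since sections along a cycle must alternate in color and therefore come in even numbers whenever they are nonempty. In the remaining case $S\geq 2$, I would fix an orientation of the ring (say clockwise) and label the sections $1,2,\ldots,S$ cyclically. Each section then has a well-defined left and right border under this orientation, and the monotone-movement property of Lemma~\ref{lem_leftBordersOnlyMoveLeft} carries over, since its proof only used the three-agent window around a border together with the coordinating structure of the payoffs---nothing global to the linear graph was invoked.

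The step that differs most from the linear case is the final conclusion that each border becomes fixed in finite time. In the linear graph, the leftmost section's left border is pinned by agent $1$, which seeds the stabilization of every other border. In the ring no such absolute endpoint exists, but for $S\geq 2$ the left border of section $s$ cannot cross the right border of section $s-1$ (indices taken modulo $S$), and symmetrically on the right. Combined with the monotone-movement property, this means each border can only move finitely many times before being squeezed against the nearest border of an adjacent section. Once all borders are fixed, persistent activation yields equilibration exactly as in Proposition~\ref{prop_equlibrationOfPathStructuredPopulations}.

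The main obstacle I anticipate is precisely this absence of an absolute endpoint, and it is resolved by invoking the mutual obstruction between adjacent sections rather than an external boundary agent. A minor care-point is to verify that the intermediate claim inside the proof of Lemma~\ref{lem_leftBordersOnlyMoveLeft}---that the agents of a given section continue to play the same strategy at every future time step---still holds in the ring, but this follows by the same local reasoning that only border agents can switch, together with the fact that the number of sections is fixed after time $T$.
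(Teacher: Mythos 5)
Your reduction of the first two steps (the number of sections is non-increasing and the monotone-movement property of Lemma \ref{lem_leftBordersOnlyMoveLeft} carries over, since every agent in a ring has degree two and the arguments are local) matches the paper. The gap is in your final "squeezing" step. You claim that the left border of section $s$ cannot cross the right border of section $s-1$ and that this mutual obstruction forces every border to stop moving. But the interface between two adjacent sections is a single boundary: when the left border of section $s$ moves left, the right border of section $s-1$ moves left \emph{simultaneously} (they are adjacent agents and remain adjacent). Nothing is squeezed. The scenario your argument fails to exclude is the one the linear endpoint was actually ruling out: both borders of a section move in the same direction infinitely often, so the entire section pattern rotates around the ring forever. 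This is fully consistent with Lemma \ref{lem_leftBordersOnlyMoveLeft} (a border that only ever moves left never triggers the lemma's hypothesis for the other direction) and with a fixed section count, yet it is not equilibration.

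The paper's proof confronts exactly this rotation case. If a section sweeps around the whole ring, every agent $i$ at some time undergoes the switch $\x_{(i-1,i,i+1)}=(\A,\A,\B)\to(\A,\B,\B)$ and at another time $(\B,\B,\A)\to(\B,\A,\A)$, i.e., it decides differently at two states with identical counts of $\A$- and $\B$-playing neighbors. By \eqref{bestRespondersUpdateRule} no best-responder can do this, so every agent is an imitator, and the paper then invokes the known equilibration result for all-imitator coordinating networks \cite[Theorem 1]{ramazi2022lower}. Your proposal is missing both the identification of the rotation scenario and this reduction to the all-imitator case; without them the proof does not go through.
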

\begin{proof}
    Following the same arguments used for the proof of the equilibration of a linear graph, it can be shown that the number of sections in a ring may not increase, and hence will become fixed at some time $T$, and that if the right border of a section moves right at some time, it may never move left afterwards, and vice versa.
    So the only possibility for the non-equilibration of a ring is that both borders of some section $s$ only and infinitely often move right or only and infinitely often move left. 
    Consider the second case, i.e., moving left.
    Then for every agent $i$ in the ring, there exists a time when it belongs to the section $s$ and a time when it does not belong to the section. 
    Hence, it will undergo the switches from $\x_{(i-1,i,i+1)} = (\A,\A,\B)$ to $(\A,\B,\B)$ and from $(\B,\B,\A)$ to $(\B,\A,\A)$. 
    So the agent decides differently at two states with the same number of $\A$ and $\B$-playing neighbors. 
    Thus, in view of \eqref{bestRespondersUpdateRule}, agent $i$, and hence, the whole ring are imitators.
    In view of the convergence result in \cite[Theorem 1]{ramazi2022lower} for arbitrary networks of all coordinating imitators, the proof is complete.
\end{proof}
\subsection{Starlikes} 
We now proceed to a more general network: The \emph{starlike} \cite{omidi2007starlike}, that is a tree with at most one branching agent.
Define a \emph{branch} as a linear graph that begins from a neighbour of the branching agent and ends with a leaf but does not contain the branching agent itself (Figure~\ref{fig:sparse1}-a).
 \begin{figure}[h]
        \centering
        \includegraphics[width=0.25\textwidth]{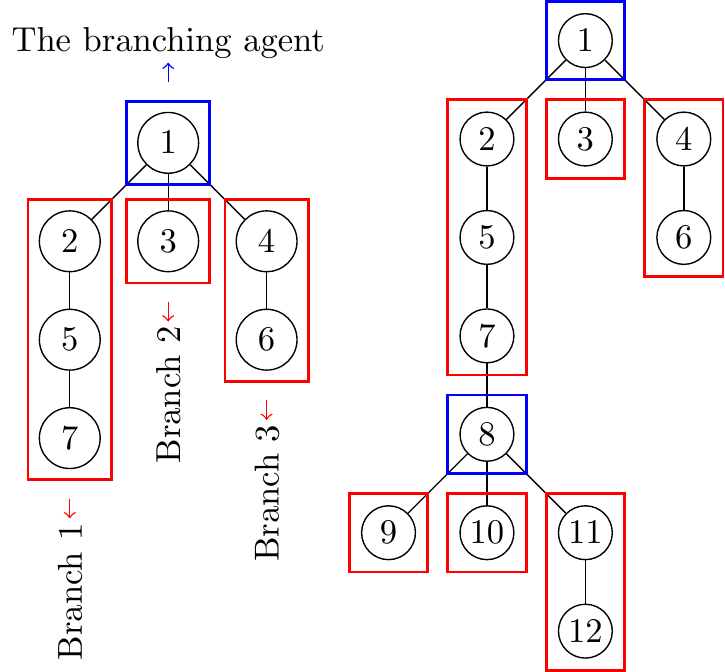}
        \caption{\small
        \textbf{a) Starlike graph.}
        The agent on the top is the branching agent.
    The graph has three branches.
        \textbf{b) a sparse-tree population.} 
        Each red section demonstrates a line in the population. Blue agents are branching agents of the population.}
        \label{fig:sparse1}
\end{figure}
\Cref{def_eventuallyPeriodicActivationSequence} and \Cref{lem_periodic_activation_seq} are for general decision-making dynamics but are framed here according to \Cref{sec_model}.
\Cref{def_eventuallyPeriodicActivationSequence} is based on the notion of \emph{eventually periodic sequences} \cite{braberman2013formal} and \Cref{lem_periodic_activation_seq} follows standard induction arguments.
\begin{definition}[Eventually periodic] \label{def_eventuallyPeriodicActivationSequence}
    The coordinating imitation and best-response dynamics are \emph{eventually periodic} under the activation sequence $\langle a_t\rangle_{t=0}^{\infty}$ if both the activation sequence and the resulting state $\x$ become periodic after some finite time $t_0$, i.e., 
    \begin{equation*}
        \exists T,t_0\in \mathbb{N}\
        \forall t \in \mathbb{N}\
        [a_{t_0+t+T} = a_{t_0+t}, \x_{t_0+t+T}=\x_{t_0+t}],
    \end{equation*}
    where $T$ is the \emph{periodicity after time $t_0$}, and time interval $[t_0,\infty)$ is the \emph{periodic interval}.    
    The activation sequence $\langle a_t\rangle_{t=0}^{\infty}$ is called an \emph{eventually periodic activation sequence}.
\end{definition}
\begin{lemma}\label{lem_periodic_activation_seq}
    If the coordinating imitation and best-response dynamics do not equilibrate under some persistent activation sequence, then there also exists a persistent eventually periodic activation sequence under which the population does not equilibrate. 
\end{lemma}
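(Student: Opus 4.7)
The plan is to exploit the finiteness of both the state space $\{\A,\B\}^n$ and the agent set $\N$, together with determinism of the dynamics, to pigeonhole a recurring configuration out of which a periodic activation pattern can be spliced.

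First, since both $\{\A,\B\}^n$ and $\N$ are finite, the pair $(\x(t),a_t)$ takes values in a finite set. The non-equilibration assumption says that there are infinitely many times $t$ with $\x(t+1)\neq \x(t)$. By pigeonhole, some particular pair $(\x^*,a^*)$ occurs at infinitely many such state-changing times $s_1<s_2<\cdots$. (In fact by determinism, if one occurrence of $(\x^*,a^*)$ is state-changing then every occurrence is, so this step is a clean pigeonhole.)

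Second, I would use persistence of the original sequence $\langle a_t\rangle$ to choose the two revisit times carefully. Since every agent appears infinitely often in $\langle a_t\rangle$, there is a finite $M$ such that every agent is activated at least once in $[s_1,s_1+M]$. Pick $k$ with $s_k>s_1+M$ and set $t_1:=s_1$, $t_2:=s_k$. Then
\begin{equation*}
    (\x(t_1),a_{t_1})=(\x(t_2),a_{t_2})=(\x^*,a^*),
\end{equation*}
every agent appears at least once in the window $[t_1,t_2-1]$, and $\x(t_1+1)\neq \x^*$.

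Third, I would define a new activation sequence $\langle b_t\rangle$ by $b_t:=a_t$ for $t<t_1$ and $b_t:=a_{t_1+((t-t_1)\bmod(t_2-t_1))}$ for $t\geq t_1$. Starting from the same initial state $\x(0)$, the induced trajectory $\x^{(b)}$ coincides with $\x$ on $[0,t_2]$ because the initial state and the activations agree there; in particular $\x^{(b)}(t_1)=\x^{(b)}(t_2)=\x^*$. A one-line determinism induction then yields $\x^{(b)}(t_2+j)=\x^{(b)}(t_1+j)$ for every $j\geq 0$, so $\x^{(b)}$ is periodic with period $t_2-t_1$ from time $t_1$ on, matching the periodicity of $\langle b_t\rangle$ and meeting Definition~\ref{def_eventuallyPeriodicActivationSequence} with $t_0=t_1$. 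Persistence of $\langle b_t\rangle$ is immediate since every agent appears in each period block, and non-equilibration follows because $\x^{(b)}(t_1+1)=\x(t_1+1)\neq \x^*$, so the state changes at least once within every period.

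The only delicate point, and hence the main obstacle, is the bookkeeping in the second step: making sure the chosen return time $t_2$ is far enough beyond $t_1$ that every agent has appeared in between, while still having $(\x(t_2),a_{t_2})=(\x^*,a^*)$. This is where persistence of the original sequence is used in an essential way; everything else is a direct consequence of finiteness and determinism.
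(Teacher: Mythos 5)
Your proof is correct and is precisely the finiteness-plus-determinism splicing argument the paper alludes to but never writes out (the text only asserts that the lemma ``follows standard induction arguments''). The one step you leave implicit --- that non-equilibration under a persistent sequence forces infinitely many state-changing times, because an eventually constant state at which every agent is activated without switching would be an equilibrium --- is itself an immediate consequence of persistence and the determinism of the update map, so the argument is complete.
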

\begin{lemma}   \label{lem_starlike_branchingAgent}
    A starlike network equilibrates under the coordinating best-response and imitation dynamics with an arbitrary persistent activation sequence, if the branching agent does not switch strategies infinitely many times.
\end{lemma}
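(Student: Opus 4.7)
The plan is to reduce the starlike dynamics to the linear-graph case already handled by Proposition~\ref{prop_equlibrationOfPathStructuredPopulations}, using the hypothesis that the branching agent stabilizes. First I would let $T_0$ be a finite time after which the branching agent never switches strategies; such a $T_0$ exists by assumption. For each branch $B$ of the starlike, consider the augmented linear graph $B^+$ obtained by appending the branching agent as an additional endpoint adjacent to the anchor of $B$. For $t\geq T_0$, the branching agent acts as a ``frozen'' endpoint of $B^+$: persistently activated, but never switching.

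Next I would verify that the section-counting arguments of Lemma~\ref{lem_numberOfSectionsDoesNotIncrease} and Lemma~\ref{lem_leftBordersOnlyMoveLeft} apply to each $B^+$ after time $T_0$. The key observation is that for any in-branch agent $i\in B$ its set of neighbors in the original starlike coincides with its set of neighbors in $B^+$ (including the branching agent, for the anchor), so its utility $u_i$ computed in the starlike agrees with the utility computed within $B^+$. Consequently, a border's switch and the resulting change in the number of sections of $B^+$ depend only on states within $B^+$, and the impossibility argument used in Lemma~\ref{lem_leftBordersOnlyMoveLeft} (a utility comparison at the active border agent) transfers verbatim. The frozen endpoint contributes at worst a fixed-strategy neighbor to the anchor; since it never switches, it can neither create new sections in $B^+$ nor induce a border to oscillate.

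Having established these analogues of Lemmas~\ref{lem_numberOfSectionsDoesNotIncrease}--\ref{lem_leftBordersOnlyMoveLeft} for each $B^+$, I would apply the argument of Proposition~\ref{prop_equlibrationOfPathStructuredPopulations} branch by branch. The original activation sequence, restricted to non-branching agents of $B$, is still persistent, which is all that the proof of Proposition~\ref{prop_equlibrationOfPathStructuredPopulations} requires to conclude that every section's borders in $B^+$ eventually stop moving and the strategies in $B$ stabilize. Since this holds for every branch and the branching agent is already fixed after $T_0$, the whole starlike equilibrates.

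The main obstacle I expect is handling the anchor agent carefully when it is an imitator. Its imitation target is chosen by comparing $u_b$ (the branching agent's utility, which in the original starlike depends on agents in \emph{other} branches) with the utility of its in-branch neighbor, so the \emph{identity} of the imitated neighbor is not determined by $B^+$ alone. What saves the argument is that the \emph{outcome} of the anchor's update is necessarily either $x_b$ or the in-branch neighbor's strategy, both of which live in $B^+$, and the section-counting of Lemma~\ref{lem_numberOfSectionsDoesNotIncrease} and the oscillation-impossibility of Lemma~\ref{lem_leftBordersOnlyMoveLeft} are structural statements that do not require predicting \emph{which} of the two outcomes occurs. Once this is checked carefully, the rest of the proof is a direct transfer of the linear-graph results to each $B^+$.
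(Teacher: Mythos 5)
Your proposal matches the paper's own proof: the paper likewise waits until the branching agent's strategy is fixed, treats each branch together with the branching agent as a linear graph whose end node no longer switches, and invokes the analogues of Lemmas~\ref{lem_numberOfSectionsDoesNotIncrease} and \ref{lem_leftBordersOnlyMoveLeft} and Proposition~\ref{prop_equlibrationOfPathStructuredPopulations} branch by branch. You are in fact more explicit than the paper, which dismisses the whole transfer as ``straightforward,'' notably in flagging that the anchor's imitation target is decided by the branching agent's utility computed over \emph{other} branches.
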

\begin{proof}
   It is straightforward to show that Lemmas \ref{lem_numberOfSectionsDoesNotIncrease} and \ref{lem_leftBordersOnlyMoveLeft} and hence proposition \ref{prop_equlibrationOfPathStructuredPopulations} hold if any of the end nodes in a linear graph fix their strategies. 
   Consequently, every branch together with the branching agent forms a linear graph in the starlike that will equilibrate, leading to the equilibration of the whole starlike.
\end{proof} 

The equilibration of starlike networks is established in Proposition \ref{prop_starlike}. 
The idea of the proof is to focus on the times when the branching agent has the maximum number of same strategy, say $\B$-playing, neighbors. 
The moment one of these neighbors, referred to as the ``special agent'' switches, the number of sections in the branch containing this special agent, referred to as the ``special branch'', will decrease, and this decrement will never be compensated in the future. 
So the number of sections in the the spacial branch is an energy-like function (see \eqref{prop_starlike_eq1}).
If before any of the neighbors switch, the branching agent itself switches, then the branching agent must be an imitator and the neighbor with the maximum utility will serve as the special agent.
Given a linear graph $P$, denote the number of sections in $P$ by $n(P)$, and more specifically by $n(P,t)$ to denote the number at time $t$. 
\begin{proposition}[Starlike]   \label{prop_starlike}
    A starlike network equilibrates under the coordinating best-response and imitation dynamics with an arbitrary persistent activation sequence.
\end{proposition}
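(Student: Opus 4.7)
The plan is to argue by contradiction, combining the reduction to an eventually periodic activation sequence from \Cref{lem_periodic_activation_seq} with the reduction to a switching branching agent in \Cref{lem_starlike_branchingAgent}, and then to construct an energy-like quantity on a single branch. Suppose that some starlike fails to equilibrate under some persistent activation sequence. Then by \Cref{lem_periodic_activation_seq} I may assume the sequence is eventually periodic with period $T$ on $[t_0,\infty)$, and by \Cref{lem_starlike_branchingAgent} the branching agent $v$ must switch strategies within every period, for otherwise the dynamics equilibrate.

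Within one period I would pick a \emph{special time} $t^*$ at which $v$ has the maximum number of same-strategy (WLOG $\B$-playing) neighbors across the period, and then track the first event $\hat t > t^*$ among either (i) a currently $\B$-playing neighbor of $v$ switching to $\A$, or (ii) $v$ itself switching. In case (i) I take the switching neighbor $w$ to be the \emph{special agent} and its containing branch to be the \emph{special branch} $P$. In case (ii), since no neighbor of $v$ has moved between $t^*$ and $\hat t$, a careful look at the eventually periodic setting forces $v$ to be an imitator, and I take the copied maximum-utility neighbor as $w$ and its branch as $P$. The crucial observation, to be verified by inspecting both update rules \eqref{imitatorsUpdateRule}--\eqref{bestRespondersUpdateRule} under the coordination condition \eqref{coordinationPayoffMatrix}, is that for $w$ to switch from $\B$ to $\A$ while $v$ plays $\B$, the other in-branch neighbor of $w$ must already play $\A$. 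Thus the switch collapses the local pattern $(\ldots,\A,\B,\A,\ldots)$ inside $P$ to $(\ldots,\A,\A,\A,\ldots)$, merging $w$'s singleton section with its neighbor's and yielding
\begin{equation}\label{prop_starlike_eq1}
    n(P,\tau+1) \le n(P,\tau) - 1
\end{equation}
for some $\tau \ge \hat t$.

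Once the strict decrease is in place, I would prove that $n(P,\cdot)$ is non-increasing for all $t\ge \hat t$ by adapting the forbidden-pattern argument of \Cref{lem_numberOfSectionsDoesNotIncrease} to the branch $P$: any increase of sections would again require the impossible transition from three same-strategy agents to an alternating triple under coordinating updates. Iterating the strict decrease over infinitely many periods then drives $n(P,\cdot)$ below zero, contradicting nonnegativity of section counts and yielding the desired contradiction. The main obstacle is the non-increase clause at the $v$-end of $P$: when $v$ itself switches, the boundary of $P$ changes and the top branch-neighbor of $v$ can, in principle, later create a new section inside $P$ by imitating $v$ or best-responding to it. Ruling this out is what I expect to require the bulk of the case analysis, and it must leverage both the maximality of $\B$-playing neighbors of $v$ at $t^*$ and a bookkeeping of which neighbors of $v$ have already changed since $t^*$, so as to prevent any such section-spawning event inside the special branch.
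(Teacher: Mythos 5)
Your outline reproduces the paper's strategy almost exactly: contradiction via the eventually periodic reduction (\Cref{lem_periodic_activation_seq}), reduction to an infinitely-switching branching agent (\Cref{lem_starlike_branchingAgent}), selection of a time of maximal $\B$-playing neighbors, the two-case split on whether a neighbor or the branching agent moves first, and the use of the section count of a ``special branch'' as the energy function. However, you explicitly defer the one step that carries the entire proof, and your stated plan for it is not viable as written. You claim you would show $n(P,\cdot)$ is non-increasing for all $t\ge\hat t$ ``by adapting the forbidden-pattern argument of \Cref{lem_numberOfSectionsDoesNotIncrease}.'' That lemma only controls agents whose neighbors all lie on the line; the special agent $w$ sits at the open end of $P$, is adjacent to the branching agent $v$, and \emph{can} increase $n(P)$ when it switches (e.g., by imitating $v$ and thereby splitting off from its in-branch neighbor). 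Indeed the paper does \emph{not} prove monotonicity of $n(P)$: it concedes that $n(P)$ may increase by one at each $\B\to\A$ switch of $w$, and instead pairs every such potential increase with a guaranteed strict decrease at the next $\A\to\B$ switch, so that the net change between consecutive ``maximal-$\B$'' switch times is at most $-1$ (inequality \eqref{prop_starlike_eq1} of the paper). The guaranteed decrease is itself nontrivial: one must show the in-branch neighbor $k$ of $w$ plays $\B$ just before the $\A\to\B$ switch, which the paper gets by combining the maximality of $\B$-playing neighbors of $v$ at $t_1-1$ (giving $u_v(T-1)\le u_v(t_1-1)$) with \Cref{lem_impossible}. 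None of this appears in your proposal; you name the obstacle but supply no mechanism to overcome it.

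The same gap recurs, in a different form, in your case (ii). There the paper must first establish that the imitated maximum-utility neighbor $a_j$ plays $\A$ whenever the branching agent switches to $\A$ (Statement 1, which again uses the coordinating payoff comparison $u_{a_j}(T_r)\ge u_{a_j}(T_j)$ and the definition of $a_j$ as the all-time maximum earner among imitated neighbors), and then run a three-state case analysis on the pair $(i,a_j)$ to show $n(P)$ drops at least once and never recovers. Your proposal compresses all of this into ``a bookkeeping of which neighbors of $v$ have already changed since $t^*$,'' which is a statement of intent rather than an argument. In short: the architecture is right and matches the paper, but the energy argument you sketch (global monotonicity of $n(P)$ after one strict drop) is false for the special branch, and the correct replacement --- the increase/decrease pairing powered by \Cref{lem_impossible} and the maximality of same-strategy neighbors --- is exactly the content you have left out.
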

\begin{proof}
    We prove by contradiction. 
    By assuming the contrary, Lemma \ref{lem_periodic_activation_seq} implies the existence of a persistent eventually periodic activation sequence denoted by $\langle b_t\rangle_{t=0}^{\infty}$ with periodic interval $[t_0,\infty)$.
    The branching agent, say $i$, switches strategies under $\langle b_t\rangle_{t=0}^{\infty}$ infinitely often; otherwise, the network equilibrates due to Lemma \ref{lem_starlike_branchingAgent}.
    Let $t_{\B}\geq t_0$ be the first time agent $i$ plays $\B$ and has the maximum number of $\B$-playing neighbours during $[t_0,\infty)$.
    Denote by $t_1$ the first moment after $t_{\B}$ when either agent $i$ or one of its neighbors switches strategies, resulting in the following two cases:

    \emph{Case 1: An agent $i$’s neighbour, say agent $j$, changes its strategy at time $t_1$.}
    Consider the branch $P$ (referred to as the ``special'' branch) including agent $j$ (referred to as the ``special'' agent). 
    Denote the times when agent $j$ switches strategies after $t_1$ by $t_2,t_3\ldots$.
    Out of these time steps,
    let $\langle t^\B_k\rangle_{k=1}^\infty\subset 
    \langle t_k\rangle_{k=1}^\infty$ be those time steps such that agent $i$ had its maximum number of $\B$-playing neighbors at each time $t^\B_k-1$.
    Clearly, $t^\B_1 = t_1$.
    We show that 
    \begin{equation}    \label{prop_starlike_eq1}
        \forall k\geq1\quad  n(P,t^\B_{k+1}) - n(P,t^\B_k) \leq -1.     
    \end{equation}
    At every time $t^\B_k$ agent $j$ switches from $\B$ to $\A$; otherwise, agent $i$ will have more $\B$-neighbors at time $t^\B_k$ compared to $t_{\B}$.
    Thus, as the dynamics are coordinating, agent $j$ has at least one $\A$-playing neighbour at $t^\B_k$, who is not agent $i$.
    Because the network is starlike, agent $j$ has at most two neighbors, so it has exactly one other neighbor, say agent $k$, who plays $\A$ at $t^\B_k$.
    So $n(P)$ reduces by one at time $t^\B_k$. 
    In view of Lemma \ref{lem_SectionReduction_in_general_paths}, $n(P)$ does not increase if any agent other than $j$ switches strategies.
    So $n(P)$ may increase in the future, only at times when agent $j$ switches strategies, i.e., $t_{k+1},t_{k+2},\ldots$.
    We show that $n(P)$ decreases at each time $T=t_{2r}$ for an arbitrary $r\in\mathbb{N}$.
    Agent $j$ switches from $\A$ to $\B$ at time $t_{2r}$.
    If neighbor $k$ plays $\A$ at time $T-1$, then agent $i$ plays $\B$ at the same time; otherwise, agent $j$ does not tend to switch.
    So $\x_{(k,j,i)}(T-1) = (\A,\A,\B)$.
    Having the maximum number of $\B$-playing neighbors at time $t_1$, 
    agent $i$'s utility at time $T-1$ is no more than at time $t_1-1$: 
    $
        u_i(T-1) \leq u_i(t_1-1).
    $    
    But this is impossible according to Lemma \ref{lem_impossible}. 
    So neighbor $k$ plays $\B$ at time $T-1$.
    Then $n(P)$ reduces by the switch of agent $j$ at time $T$.
%
    On the other hand, $n(P)$ may increase by at most one at each time $t_{2r+1}, r\in\mathbb{N}$.
    Therefore, there is no finite time $T'>t^\B_k$ when $n(P)$ equals its value at $t^\B_k$, proving \eqref{prop_starlike_eq1}, a contradiction.

    \emph{Case 2: Agent $i$ switches from $\B$ to $\A$ at time $t_1$.}
    There exists time $t_2>t_1$ when agent $i$ tends to switch back to $\B$.
    However, the number of agent $i$'s $\B$-playing neighbors is maximized at time $t_1$, when it switched to $\A$. 
    Hence, because of the coordinating dynamics, agent $i$ is an imitator.
    
    Denote by $\langle T_r\rangle_{r=0}^\infty$ the time steps after $t_0$ that agent $i$ changes its strategy, and let $\langle a_r\rangle_{r=0}^\infty$ be the corresponding neighbors imitated by agent $i$.
    Let $a_j$ be an agent among $\langle a_r\rangle_{r=0}^\infty$ with the maximum utility, i.e., 
    $
        a_j = \arg\max_{r} u_{a_r}.
    $     
    So the maximum utility among the agent $i$'s neighbors was earned by agent $a_j$ at time $T_j$.
    Consider the branch $P$ including agent $a_j$.
    We show that the number of sections in $P$ decreases at least once after time $T_j$ but never increases afterwards, which is in contradiction with $T_j$ belonging to the periodic interval of the activation sequence. 
    
    First, we prove the following:
    \emph{Statement 1. At any time $T_r$, $r\geq0$, when agent $i$ switches to $\A$, agent $a_j$ must also play $\A$.}
    At time $T_j-1$, agent $a_j$ plays $\B$ and has at most one $\B$-playing neighbor.
    At time $T_r-1$, agent $i$ plays $\B$, so agent $a_j$ has at least one $\B$-playing neighbor. 
    So if agent $a_j$ plays $\B$ at time $T_r$, it earns no less than at time $T_j$ because of the coordinating dynamics, i.e., 
    $
        u_{a_j}(T_r) \geq u_{a_j}(T_j).
    $
    Hence, according to the definition of $a_j$, agent $a_j$ is a maximum earner at time $T_r$. 
    Since agent $i$ does switch at time $T_r$, it has to switch to the strategy of agent $a_j$ according to \eqref{imitatorsUpdateRule}.
    This is, however, impossible since both agents $a_j$ and $i$ play the same strategy $\B$ at time $T_r$. 
    This proves Statement 1. 

    
    Next, we list and investigate the possible strategy states for the pair $(i,a_j)$ starting from time $T_j$:

    \emph{Case 2.1. $\x_{(i,a_j)}(t) = (\B, \B)$.}
    Then at the next time $T_r\geq t$ when agent $i$ changes strategies, it switches to $\A$. 
    Hence, according to Statement 1, $\x_{(i,a_j)}(T_r-1) = (\B, \A)$.
    So according to Lemma \ref{lem_SectionReduction_in_general_paths}, $n(P)$ reduces by at least 1 during $[t,T_r-1]$ as agent $i$ does not switch strategies in this interval.
    We reach Case 2.2 at time $T_r$ as  $\x_{(i,a_j)}(T_r) = (\A, \A)$.
    
    \emph{Case 2.2. $\x_{(i,a_j)}(t) = (\A, \A)$.}
    Then at the next time $T_s\geq t$ when agent $i$ changes strategies, it switches to $\B$.
    Now if agent $a_j$ plays $\B$ at time $T_s-1$, we have $\x_{(i,a_j)}(t) = (\A, \B)$. 
    So again according to Lemma \ref{lem_SectionReduction_in_general_paths}, $n(P)$ reduces by at least 1 during $[t,T_s-1]$.
    We reach Case 1 at time $T_s$ as  $\x_{(i,a_j)}(T_s) = (\B, \B)$.
    Now if agent $a_j$ plays $\A$ at time $T_s-1$, we have $\x_{(i,a_j)}(t) = (\A, \A)$. 
    So according to Lemma \ref{lem_SectionReduction_in_general_paths}, $n(P)$ may not increase during $[t,T_s-1]$.
    We reach Case 2.3 at time $T_s$ as  $\x_{(i,a_j)}(T_s) = (\B, \A)$.
    
    \emph{Case 2.3. $\x_{(i,a_j)}(t) = (\B, \A)$.}
    Then at the next time $T_p\geq t$ when agent $i$ changes strategies, it switches to $\A$.
    Hence, according to Statement 1, $\x_{(i,a_j)}(T_p-1) = (\B, \A)$ which is the same as the state at time $t$ in this case.
    So according to Lemma \ref{lem_SectionReduction_in_general_paths}, $n(P)$ does not increase during $[t,T_p-1]$.
    We reach Case 2.2 at time $T_p$ as  $\x_{(i,a_j)}(T_p) = (\A, \A)$.
    
    At time $T_j$, the strategy state $\x_{(i,a_j)}$ matches Case 1, where $n(P)$ reduces. 
    The proof is complete since it does not increase afterwards in any of the above cases. 
\end{proof}

\subsection{Sparse-trees}
We are ready to prove \Cref{th_sparseThree}. 
The idea is to show that the ``special branches'' of two branching nodes will intersect, resulting in the so-called ``golden branch'' (\Cref{fig:sparse1}) which is guaranteed to equilibrate.

\begin{proof}[Proof of \Cref{th_sparseThree}]
    Equilibration of starlike networks were shown in Proposition \ref{prop_starlike}.
    So here we consider the case with at least two branching agents.
    We prove by contradiction and consider a persistent eventually periodic activation sequence denoted by $\langle b_t\rangle_{t=0}^{\infty}$ with periodic interval $[t_0,\infty)$. 
    Similar to the proof of lemma \ref{lem_starlike_branchingAgent}, it can be shown that at least one branching agent changes its strategy during the periodic interval of the oscillation. 
    We refer to the agents who change their strategy during $[t_0,\infty)$ a \emph{settling} agent and otherwise \emph{unsettling}.
    For each unsettling agent $i$, denote its special branch defined in the proof of Proposition \ref{prop_starlike} by $P_i$. 
    Equilibration can be shown using Lemma \ref{lem_starlike_branchingAgent} when there is no unsettling branching agent and similar to 
    Proposition \ref{prop_starlike} when the special branches of no two branching agents overlap (no golden branch). 
    So consider the case where there are two branching agents with the corresponding special neighbors $i$ and $j$, and whose special branches intersect, denoted by $P$.
    In view of Lemma \ref{lem_SectionReduction_in_general_paths}, $n(P)$ increases only at the time steps when either agent $i$ or $j$ switches. 
    On the other hand, for both Case 1 and 2 in Proposition \ref{prop_starlike}, it is guaranteed that there exists some infinite time series $\langle t^i_k\rangle_k^\infty$ (when agent $i$ switches) such that
    $
        n(P,t^i_{k+1}) - n(P,t^i_{k}) \leq -1
    $ for all $k\geq0$,
    and 
    a some time series $\langle t^j_k\rangle_k^\infty$ (when agent $j$ switches) such that
    $
        n(P,t^j_{k+1}) - n(P,t^j_{k}) \leq -1
    $
    for all $k\geq0$.
    This is a contradiction as then $n(P)$ is unbounded. 
\end{proof}

\section{Conclusion}
We showed that every sparse tree network of coordinating heterogeneous imitators and best-responders equilibrates under any persistent activation sequence.
This implies that neither the heterogeneity in the agents' perceptions of the coordination game (i.e., different payoff matrices), nor the order the agents become active can cause fluctuations in the mixed population, at least when their connections are as sparse as a sparse-tree.
Whether dense trees or general graphs equilibrate under every activation sequence remains an open problem. 
For the proof, we introduced the number of sections in a linear graph as a potential function and generalized it to the starlike and then sparse tree networks.
The potential functions may be tested in other decision making dynamics. 
For example, it is expected for the number of sections to increase and eventually become fixed in anti-coordination games under certain conditions \cite{fates2006asynchronous}.


\section*{Appendix}
\begin{lemma}   \label{lem_impossible}
    Consider a network governed by the coordinating best-response and imitation dynamics with an arbitrary activation sequence.
    Assume that the network includes neighboring agents $p-1$ and $p$, each of degree two, and denote the other neighbor of agent $p$ by $p+1$.
    If there exists some time $T\geq0$ when agent $p$ tends to switch strategies and 
    $\x_{(p-1,p,p+1)}(T) = (\A,\A,\B),$ 
    then agent $p$ does not tend to switch strategies at any time $T'$ when
    $\x_{(p-1,p,p+1)}(T') = (\A,\B,\B)$
    and when agent $p+1$ earns non-less, i.e., $u_{p+1}(T')\geq u_{p+1}(T)$.
\end{lemma}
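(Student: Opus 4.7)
My plan is to split on the type of agent $p$'s update rule, exploiting that the pair of neighbors of $p$ has the same strategy profile $(\A,\B)$ at both $T$ and $T'$; only $p$ itself has flipped from $\A$ to $\B$.

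If $p$ is a best-responder, the decision depends only on the neighbors' strategies and $p$'s own payoff matrix, independently of $p$'s current strategy. The $\A\to\B$ switch at $T$ requires $T_p+P_p>R_p+S_p$, and at $T'$ this same strict inequality forbids the reverse $\B\to\A$ switch. The hypothesis $u_{p+1}(T')\geq u_{p+1}(T)$ is not used in this case.

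The substantive case is when $p$ is an imitator. The switch at $T$ forces the unique argmax among $p$'s two neighbors to be $p+1$, i.e., $u_{p+1}(T)>u_{p-1}(T)$ strictly (a tie would leave $p$ unchanged by the tie-breaking rule). I then bound $u_{p-1}(T')$ from above using the degree-two hypothesis on $p-1$: its utility is the sum of the contribution from $p$ and from its single other neighbor. The contribution from $p$ to $u_{p-1}$ drops by exactly $R_{p-1}-S_{p-1}>0$ between $T$ and $T'$, since $x_{p-1}$ stays at $\A$ while $x_p$ flips from $\A$ to $\B$ and the coordination condition \eqref{coordinationPayoffMatrix} gives $R_{p-1}>S_{p-1}$. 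The contribution from the other neighbor can grow by at most the same quantity $R_{p-1}-S_{p-1}$, attained when that neighbor flips from $\B$ at $T$ to $\A$ at $T'$. Hence $u_{p-1}(T')\leq u_{p-1}(T)$. Chaining with the hypothesis $u_{p+1}(T')\geq u_{p+1}(T)$ yields $u_{p-1}(T')\leq u_{p-1}(T)<u_{p+1}(T)\leq u_{p+1}(T')$, so the unique highest-earning neighbor of $p$ at $T'$ is $p+1$, who plays $\B$—the same strategy $p$ already holds—and therefore $p$ does not tend to switch.

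The only real obstacle is the bound $u_{p-1}(T')\leq u_{p-1}(T)$, and the degree-two hypothesis on $p-1$ is essential for it: without that restriction, additional neighbors of $p-1$ could inflate $u_{p-1}(T')$ enough to overturn the chain of inequalities.
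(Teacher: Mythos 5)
Your proposal is correct and follows essentially the same route as the paper's own proof: a case split on best-responder versus imitator, the strict inequality $u_{p+1}(T)>u_{p-1}(T)$ forced by the switch at $T$, the bound $u_{p-1}(T')\leq u_{p-1}(T)$ from the degree-two structure of agent $p-1$, and the chaining with the hypothesis $u_{p+1}(T')\geq u_{p+1}(T)$. The only cosmetic difference is that in the best-responder case you compute the inequality $T_p+P_p>R_p+S_p$ directly from the identical neighbor profile, whereas the paper invokes the coordination monotonicity argument; both are valid.
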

\tb{\begin{proof}
    Should agent $p$ be a best-responder, its tendency to switch strategies at time $T$ implies that it tends to play $\B$ if at least one of its neighbors plays $\B$ in view of \eqref{bestRespondersUpdateRule} and \eqref{coordinationPayoffMatrix}.
    Therefore, agent $p$ also tends to play $\B$ at time $T'$ since it has a $\B$-playing neighbor.  
    So consider the case where agent $p$ is an imitator. 
    At time $T$ agent $p$ tends to imitate agent $p+1$ who plays $\B$ and has at most one other $\B$-playing neighbor.
    So agent $p+1$ earns more than agent $p-1$, i.e.,
    $        
        u_{p+1}(T) > u_{p-1}(T). 
    $
    We know that agent $p+1$ earns at time $T$ no more than at time $T'$, i.e.,
   $
        u_{p+1}(T) \leq u_{p+1}(T').
    $
    Moreover, at both times $T$ and $T'$, agent $p-1$ plays $\A$ but has at time $T$ at least and at time $T'$ at most one other $\A$-playing neighbor, implying
    $
        u_{p-1}(T) \geq u_{p-1}(T').
    $ 
    These inequalities result in 
    $
    u_{p+1}\left(T'\right)>u_{p-1}\left(T'\right),
    $
    which implies the imitator agent, $p$, does not tend to change its strategy to agent $p-1$'s. 
    completing the proof.    
\end{proof}}

We say that a network \emph{admits a linear graph} $(1,2,\ldots,m)$ if there is a link between node $i$ and $i+1$ for all $i=1,\ldots,m-1$ and the degree of every node $2,\ldots,m-1$ is two. 
We refer to $(2,\ldots,m-1)$ as the \emph{interior} of the linear graph. 
\begin{lemma}\label{lem_SectionReduction_in_general_paths} 
    Consider a network admitting the linear graph $(1,\ldots,m)$ governed by the coordinating best-response and imitation dynamics. 
    Then the number of sections in the interior of the linear graph does not increase if each of the ending agents $1$ and \textcolor{blue}{$m$} either are a leaf or its strategy does not change under the activation sequence. 
\end{lemma}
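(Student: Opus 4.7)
The plan is to mirror the proof of \Cref{lem_numberOfSectionsDoesNotIncrease}, treating the linear graph $(1,\ldots,m)$ as a standalone path whose only interaction with the rest of the ambient network happens through its endpoints, and to control those endpoints using the hypothesis that each is either a leaf or strategy-fixed. A change in the section count requires some agent in $\{1,\ldots,m\}$ to switch strategies, so I would handle interior agents and endpoints separately.

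First, for any interior agent $i \in \{2,\ldots,m-1\}$, the hypothesis that its degree in the ambient network equals two means that its only neighbors are $i-1$ and $i+1$. Both coordinating update rules \eqref{imitatorsUpdateRule} and \eqref{bestRespondersUpdateRule} then force $i$ to stay put when both its neighbors share its strategy, so only border agents of the linear graph can switch. Once this is in place, the two-case enumeration used in the proof of \Cref{lem_numberOfSectionsDoesNotIncrease} applies verbatim: if both linear-graph neighbors play the target strategy $s$ then a size-one section disappears and the count strictly drops, while if exactly one neighbor plays $s$ the border merely ``slides'' and the count is unchanged. Next, for an endpoint, the fixed-strategy branch of the hypothesis immediately gives no change. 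In the leaf branch, the endpoint's unique neighbor is its linear-graph neighbor, so a switch can only be an adoption of that neighbor's strategy; before the switch the endpoint is a singleton section, after the switch it merges with the adjacent section, so the count again strictly drops.

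The step I expect to require the most care is the boundary interaction, i.e., when agent $2$ or $m-1$ switches while one of the two hypotheses (leaf/fixed) is in force at the adjacent endpoint. Here the switch of agent $2$, say, generally depends on agent $1$'s strategy even when agent $1$ is a passive leaf, so one must check that the global section count of $(1,\ldots,m)$ never increases in any of the possible strategy triples $\x_{(1,2,3)}$ at the moment of the switch. A brief enumeration shows that each such triple reduces to Case 1 or Case 2 of \Cref{lem_numberOfSectionsDoesNotIncrease} once the (fixed or leaf-consistent) strategy of agent $1$ is incorporated, so no sub-case creates a new section on net. Combining the interior analysis, the endpoint analysis, and this boundary check yields that the number of sections in the linear graph is monotonically nonincreasing, which is the claim.
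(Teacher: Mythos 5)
Your proof is correct and takes essentially the same route as the paper: the paper's own proof is a one-line reduction to Lemma~\ref{lem_numberOfSectionsDoesNotIncrease}, treating a strategy-fixed endpoint as one that is effectively never activated, and your interior/endpoint/boundary case analysis is exactly the fleshed-out version of that reduction (leaf endpoints fall under Case~2, agents $2$ and $m-1$ under Case~1). One caveat, which applies equally to the paper's own argument: what is actually bounded is the section count of the whole path $(1,\ldots,m)$ rather than of the interior $(2,\ldots,m-1)$ viewed in isolation --- the two counts can differ when a fixed endpoint disagrees with its neighbor --- but the former is evidently the reading the paper intends in its applications.
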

\tb{\begin{proof}
    The proof follows \Cref{lem_numberOfSectionsDoesNotIncrease} with an activation sequence that does not activate an ending agent whose strategy is fixed. 
\end{proof}}

\bibliography{main}

\end{document}